\newcommand{\w}{\omega}
\newcommand{\1}{\mathds 1  }
\newcommand{\pH}{port-Hamiltonian}
\newcommand{\G}{\mathcal{G} }
\newcommand{\V}{\mathcal{V} }
\newcommand{\E}{\mathcal{E} }
\newcommand{\comm}{c}
\newtheorem{mythm}{Theorem}
\newtheorem{myrem}{Remark}
\newtheorem{ass}{Assumption}
\title{\LARGE \bf
A Port-Hamiltonian Approach to Optimal Frequency Regulation in Power Grids
}
\author{Tjerk Stegink and Claudio De Persis and Arjan van der Schaft
\thanks{This work is supported by the NWO (Netherlands Organisation for Scientific Research) programme  \emph{Uncertainty Reduction in Smart Energy Systems (URSES)} under the auspices of the project ENBARK.}
\thanks{T.W. Stegink and C. De Persis are with the Engineering and Technology institute Groningen (ENTEG), Mathematics and Computer Science,
        University of Groningen, 9747 AG Groningen, The Netherlands.
        {\tt\small \{t.w.stegink, c.de.persis\}@rug.nl}}
\thanks{A.J. van der Schaft is with Johann Bernoulli Institute for Mathematics and Computer Science, University of Groningen, Nijenborgh 9, 9747 AG Groningen,
the Netherlands.
        {\tt\small a.j.van.der.schaft@rug.nl}}%
}
\begin{document}

\maketitle
\thispagestyle{empty}
\pagestyle{empty}

\begin{abstract}
This paper studies the problem of frequency regulation in power grids, while maximizing the social welfare. Two price-based controllers are proposed; the first one an internal-model-based controller and the second one based on a continuous gradient method for optimization.
Both controllers can be implemented in a fully distributed fashion, with freedom in choosing a controller communication network. As a result,  two real-time dynamic pricing models described by \pH\ systems are obtained. By coupling with the \pH\ description of the physical network we obtain a closed-loop \pH\ system, whose properties are exploited to prove asymptotic stability of the set of optimal points. Numerical results show the performance of both controllers in a simple case study.


\end{abstract}

\section{Introduction}

Stability of power networks is becoming an increasingly important topic in recent years. Especially with the growth of renewable energy sources there is an increasing fluctuation in the supply of power. As a result, it is more difficult for traditional energy sources to match the supply with the demand.  To alleviate some of these problems, we may introduce  a feedback mechanism that encourages the consumers to change their usage when it is difficult for the generators and the network to match demand. One approach is by using real-time dynamic pricing as a control method. 

The idea of using dynamic pricing to achieve optimal supply-demand matching is not new in the literature on power networks.  For a historical paper on dynamic pricing and market stability we refer to \cite{dynpricehis}. See e.g. also \cite{kiani_anna} and \cite{CDC2010_Stability} for more recent papers on real-time dynamic pricing, which mainly focus on the economic part of optimal supply-demand matching. However, the coupling between the solution of the optimization problem and the physical dynamics of the network should not be ignored as this could result in instability of the grid \cite{zhangpapaautomatica}. The coupling between the physics of the power network with the market dynamics has previously been studied in for example \cite{zhangpapaautomatica}, \cite{alv_meng_power_coupl_market},  and \cite{AGC_ACC2014}. 

In this paper, we propose a new approach for the modeling, analysis and control of smart grids based on using energy functions, both for the physical network as well as for the dynamic pricing algorithm. The underlying framework is based on the theory of port-Hamiltonian systems, which lends itself to the integration of dynamic pricing algorithms that allow to consider economical factors in the control of smart grids. The objective is to have producers and consumers to fairly share utilities and costs associated with the generation and consumption of power. The challenge of achieving this in an optimal manner is called the \emph{social welfare problem}. Simultaneously, the goal is to achieve zero frequency deviation w.r.t. to the nominal value (e.g. 50 Hz) in the power network. 

One of the approaches to solve an optimal frequency regulation problem is by using an internal-model-based controller as in \cite{swing-claudio,output_agreement}. 
We will continue along the same lines as in \cite{swing-claudio} which, among other things, treated optimal frequency regulation  in case of quadratic power production cost functions and constant unknown demand. The first main contribution of this paper is that we  extend the results of \cite{swing-claudio} where we will include a quadratic consumer utility function. The internal-model-based controller proposed in \cite{swing-claudio} is modified accordingly so that it steers the trajectories to the points of maximal social welfare while regulating the frequency. Moreover, this is all achieved within the \pH\ framework. In particular, we will show that the dynamics of the physical model of the power network as well as the real-time dynamic pricing model can be represented as \pH\ systems.



Another well-known controller design method for solving a(n) (social welfare) optimization problem is primal-dual gradient method based control. The literature on the gradient method has become quite extensive over the last decades, starting with the monograph \cite{arrow_gradmethod}.  Also in power grids this method is often applied to design distributed controllers, see for example \cite{zhangpapaautomatica,AGC_ACC2014} and  \cite{feijer-paganini}. 
Our contribution to the existing literature consists in showing that the real-time dynamic pricing model obtained when applying the gradient method can be represented as a \pH\ system, which demonstrates that the \pH\  framework can be extended from physical system modeling to markets dynamics as well. 

The outline of this paper is as follows. In Section \ref{sec:prel} we first state the preliminaries on the power network model and the social welfare problem. Next, we introduce an internal-model-based controller in Section \ref{sec:intmod}, and discuss its asymptotic stability. Thereafter in Section \ref{sec:gradmeth}, we propose a gradient method based controller in \pH\ form and we perform a similar stability analysis. Numerical results on both controllers will be discussed subsequently in Section \ref{sec:num}. Finally, we  state suggestions for future research.



\vspace{1cm}

\section{Preliminaries}\label{sec:prel}
\subsection{Power network model}
Consider a power grid consisting of $ n $ buses. The network is represented by a connected and undirected graph $ \G = (\V, \E) $, where the nodes, $ \V = \{1, . . . , n\} $, is the set of buses and the edges, $ \E \subset \V \times \V = \{1, . . . , m\} $, is the set of transmission lines connecting the buses.  The ends of edge $ k $ are arbitrary labeled with a ‘+’ and a ‘-’, so that the incidence matrix $D$ of the network is given by 
\begin{align*}
D_{ik}=\begin{cases}
+1 &\text{if $i$ is the postive end of $k$}\\
-1 &\text{if $i$ is the negative end of $k$}\\
0 & \text{otherwise.}
\end{cases}
\end{align*}
Each bus represents a control area and is assumed to
have controllable power generation and a price-controllable load. The dynamics at each bus are assumed to be given by \cite{swing-claudio}, \cite{powsysdynwiley}
\begin{equation}\label{eq:swingeq}
\begin{aligned}
\dot \delta_i&=\w_i^b-\w^n\\
M_i\dot \w_i&=u_{ig}-u_{id}-\sum_{j\in\mathcal N_i}V_iV_jB_{ij}\sin(\delta_i-\delta_j)\\&-A_i(\w_i^b-\w^n),
\end{aligned}
\end{equation}
which are commonly known as the \emph{swing equations}. We use here the following notations.
\begin{center}
\begin{tabular}{cl}
 $\delta_i$ & Voltage angle at bus $i$ \\ 
 $\w_i^b$ & Frequency at bus $i$ \\ 
 $\w^n$ & Nominal frequency \\ 
 $\w_i$ & Frequency deviation at bus $i$, i.e. $\w_i^b-\w^n$\\ 
 $V_i$ & Voltage at bus $i$\\ 
 $M_i$ & Moment of inertia at bus $i$ \\ 
 $A_i$ & Damping constant at bus $i$ \\ 
 $\mathcal N_i$ & Set of buses connected to bus $i$ \\
  $B_{ij}$ & Susceptance of the line between buses $i$ and $j$\\  
 $u_{di}$ & Power demand at bus $i$ \\ 
 $u_{gi}$ & Power generation at bus $i$ \\ 
\end{tabular}
\end{center}

\begin{ass}
By using the power network model \eqref{eq:swingeq} the following assumptions are made, which are standard in a broad range of literature on power network dynamics.
\begin{itemize}
\item Lines are lossless, i.e., the conductance is zero. This
assumption is generally valid for the case of high
voltage lines connecting different control areas.
\item  Nodal voltages $ V_i $ are constant.
\item  Reactive power flows are ignored.
\item  A balanced load condition is assumed, such that the
three phase network can be analyzed by a single phase.
\end{itemize}
\end{ass}
Define the voltage angle differences between the buses by $\eta=D^T\delta$.
Further define the angular momenta by $ p:=M\w$ where $\w=\w^b-\w^n$ the (aggregated)
frequency deviations and $M=\text{diag}(M_1,\ldots,M_n)$ are the moments of inertia. Let $\Gamma=\text{diag}(\gamma_1,\ldots,\gamma_k)$ and $\gamma_k=V_iV_jB_{ij}$ where $k$ corresponds to edge $(i,j)\in\E$. Finally, define the Hamiltonian $H_p(\eta,p)$ by 
\begin{align}\label{eq:hp}
H_p(\eta,p)=\frac12p^TM^{-1}p-\1^T\Gamma \cos \eta,
\end{align}
which consists of kinetic energy and a pendulum like potential energy.  The swing equations \eqref{eq:swingeq} are then represented by the  \pH\ system
\begin{equation}\label{eq:phswingeq}
\begin{aligned}
\begin{bmatrix}
\dot \eta\\
\dot p
\end{bmatrix}&=
\begin{bmatrix}
0&D^T\\
-D&-A
\end{bmatrix}
\nabla H_p(\eta,p)+
\begin{bmatrix}
0&0\\I&-I
\end{bmatrix}u\\
y&=\begin{bmatrix}
0&I\\
0&-I
\end{bmatrix}\nabla H_p(\eta,p)=\begin{bmatrix}
M^{-1}p\\-M^{-1}p
\end{bmatrix}=\begin{bmatrix}
\w\\-\w
\end{bmatrix}
\end{aligned}
\end{equation}
where $u=(u_g,u_d)$. Note that the system \eqref{eq:phswingeq} satisfies the passivity property
\begin{align*}
\dot H_p&=p^TM^{-1}\dot p+(\Gamma \sin \eta)^T\dot \eta=p^TM^{-1}(-D\Gamma\sin \eta\\&-AM^{-1}p+u_g-u_d)+(\Gamma \sin \eta)^TD^TM^{-1}p\\&=-\w^TA\w+\w^T(u_g-u_d)\leq u^Ty.
\end{align*}
For an extensive study on the stability and equilibria of the swing equations based on the Hamiltonian function \eqref{eq:hp}, we refer to \cite{swing-claudio}.

\subsection{Social welfare problem}
We define the social welfare by $U(u_d)-C(u_g)$, which consists of a  utility  function $U(u_d)$ of the consumers $u_d$ and the total power generation cost $C(u_g)$ associated to the producers $u_g$. The objective is to maximize the social welfare under the constraint of zero frequency deviation. We assume that $C(u_g)$ is a strictly convex function and $U(u_d)$ is a strictly concave function so that we will obtain an  optimization problem which is convex. 

By analyzing the equilibria of \eqref{eq:swingeq}, it follows that a necessary condition for zero frequency deviation is $\1^T u_d=\1^T u_g$ \cite{swing-claudio}, i.e.,  the total supply must match the total demand. It can be noted that $(u_g,u_d)$ is a solution to the latter equation if and only if there exists a $v\in \mathbb R^{m_c}$ such that $D_\comm v-u_g+u_g=0$ where $D_\comm\in\mathbb R^{n\times m_c}$ is the incidence matrix of some connected \emph{communication} graph with $m_c$ edges and $n$ nodes. This communication graph may be different from the physical network topology and will play a central role in the controllers proposed in Sections \ref{sec:intmod} and \ref{sec:gradmeth}. Because of the latter equivalence, we  consider the following convex minimization problem: 
\begin{equation}
\begin{aligned}\label{eq:minprob}
\min_{u_g,u_d,v} \ \  & R(u_g,u_d):=C(u_g)-U(u_d)\\
\text{s.t.}\ \  & D_\comm v-u_g+u_d=0.
\end{aligned}
\end{equation}
The corresponding Lagrangian is given by 
\begin{align*}
L = C(u_g)-U(u_d)+\lambda^T(D_\comm  v-u_g+u_d)
\end{align*}
with Lagrange multipliers $\lambda \in\mathbb R^n$. The resulting first-order optimality conditions ($\nabla L=0$)  are given by  
\begin{equation}\label{eq:KKTcond}
\begin{aligned}
	\nabla C(\bar u_g)-\bar \lambda         & =0  \\
	-\nabla U(\bar u_d)+\bar \lambda        & =0  \\
	D_\comm^T\bar \lambda                   & =0  \\
	D_\comm \bar  v-\bar u_g+\bar u_d       & =0.  
\end{aligned}
\end{equation}
Since the minimization problem is convex it follows that $(\bar u_g,\bar u_d,\bar v,\bar \lambda)$ is an optimal solution to \eqref{eq:minprob} if and only if it is a solution to \eqref{eq:KKTcond}, which is a standard result in the literature on convex optimization \cite{convexopt}.

\section{Internal-model-based controller}\label{sec:intmod}
In this section we extend the results of \cite{swing-claudio} in which we include a  utility function for the demand. 
We assume that the utility functions are quadratic and  given by $C(u_g)=\frac12u_g^TQ_gu_g+c^Tu_g, U(u_d)=-\frac12u_d^TQ_du_d+b^Tu_d$ where $Q_d,Q_g\in\mathbb R^{n\times n}$ are symmetric positive definite matrices and $c,b\in\mathbb R^n$. Consider the minimization problem \eqref{eq:minprob}. The first-order optimality conditions in this case amount to
\begin{equation}\label{feascond}
\begin{aligned}
	Q_g\bar u_g+c-\bar \lambda              & =0  \\
	Q_d\bar u_d-b+\bar \lambda              & =0  \\
	D^T_\comm \bar \lambda                  & =0  \\
	D_\comm \bar v-\bar u_g+\bar u_d        & =0.
\end{aligned}
\end{equation}
Note that $Q_gu_g+c$ are the (aggregated) marginal costs of the producers and likewise $-Q_du_d+b$ are the marginal utilities of the consumers. Observe from \eqref{feascond} that the prosumers  (combination of producers and consumers)  achieve maximal welfare if and only if their marginal costs and utilities respectively  are equal to the price $\lambda$, which is a standard result in economics. The optimal production and demand are therefore given by 
\begin{equation}\label{eq:uopt}
\begin{aligned}
\bar u_g&=Q_g^{-1}(\bar \lambda-c)\\
\bar u_d&=Q_d^{-1}(b-\bar \lambda).
\end{aligned}
\end{equation}
From the third equation of \eqref{feascond} it follows that the prices must be identical in each control area, i.e., $ \bar \lambda=\1 \lambda^*$, where the common price $\lambda^*$ is computed as
\begin{align}
\1^T\bar u_g&=\1^TQ_g^{-1}( \1\lambda^*-c)=\1^TQ_d^{-1}(b-\1  \lambda^*) =\1^T\bar u_d \nonumber\\
\Rightarrow \bar \lambda&=\1\lambda^*, \qquad \lambda^*=\frac{\1^T(Q_g^{-1}c+Q_d^{-1}b)}{\1^T(Q_g^{-1}+Q_d^{-1})\1} \label{eq:lambdastar}.
\end{align}
Based on the controller design proposed in \cite{swing-claudio}, we consider the following price-based controller dynamics in \pH\ form with inputs $u_\lambda$ and outputs $y_\lambda$: 
\begin{equation}\label{eq:lambdadyn}
\begin{aligned}
\dot \lambda&=-L_\comm\nabla H_c(\lambda)+\begin{bmatrix}
Q_g^{-1}&-Q_d^{-1}
\end{bmatrix}u_\lambda\\
y_\lambda
&=\begin{bmatrix}
Q_g^{-1}\\-Q_d^{-1}
\end{bmatrix}\nabla H_c(\lambda)+\begin{bmatrix}
-Q_g^{-1}c\\Q_d^{-1}b
\end{bmatrix}.
\end{aligned}
\end{equation}
The controller Hamiltonian is given by $H_c(\lambda)=\frac{1}{2}\lambda^T\lambda$ and $L_{\comm}=D_\comm D^T_\comm$ is the Laplacian matrix of the communication graph. We interconnect systems \eqref{eq:phswingeq} and \eqref{eq:lambdadyn} in a power-preserving way by $u_\lambda=-y, u=y_\lambda$. Then  we obtain the closed-loop \pH\ system 
\begin{align}
\begin{bmatrix}
\dot \eta\\
\dot p\\
\dot \lambda
\end{bmatrix}&=
\begin{bmatrix}
0&D^T&0\\
-D&-A&Q_g^{-1}+Q_d^{-1}\\
0&-Q_g^{-1}-Q_c^{-1}&-L_\comm
\end{bmatrix}
\nabla H(x)\nonumber\\&-\begin{bmatrix}
0\\Q_g^{-1}c+Q_d^{-1}b\\0
\end{bmatrix}\label{eq:clphsys1}
\end{align}
where $x=(\eta,p,\lambda)$ and the Hamiltonian is given by 
\begin{align*}
H(x)&=H_p(\eta,p)+H_c(\lambda)\\
&=\frac12p^TM^{-1}p-\1 ^T\Gamma\cos \eta+\frac{1}{2}\lambda^T\lambda.
\end{align*}
The equilibria  of \eqref{eq:clphsys1} are all  $(\bar \eta,\bar p,\bar \lambda)$ satisfying
\begin{equation}\label{eq:equi3}
\begin{aligned}
\bar p&=0\\
0&=-D\Gamma \sin \bar \eta+(Q_g^{-1}+Q_d^{-1})\bar \lambda-Q_g^{-1}c-Q_d^{-1}b\\
\bar \lambda&=\1\lambda^*, \qquad  \lambda^*=\frac{\1^T(Q_g^{-1}c+Q_d^{-1}b)}{\1^T(Q_g^{-1}+Q_d^{-1})\1}.
\end{aligned}
\end{equation}
We define $\Omega_1$ as the solution set of \eqref{eq:equi3}, i.e. 
\begin{align*}
\Omega_1&=\{ (\bar \eta,\bar p,\bar \lambda) \ | \  (\bar \eta,\bar p,\bar \lambda) \text{ is a solution to } \eqref{eq:equi3} \}.
\end{align*}
For proving local asymptotic stability of the closed-loop system \eqref{eq:clphsys1} an additional assumption is required.
\begin{ass} 
\label{ass:sec1}
There exists a $(\bar \eta,\bar p,\bar \lambda)\in \Omega_1$ such that $\bar \eta_k\in (- \pi/2, \pi/2)$ for all $k\in\E$.
\end{ass}

This  assumption is standard in studies on power grid stability and is also referred to as a security constraint \cite{swing-claudio-voltage}.

%
%
%
%
%

\subsection{Stability}
We will show that trajectories $(\eta,p,\lambda) $ satisfying \eqref{eq:clphsys1} and initialized sufficiently close to an equilibrium point of \eqref{eq:clphsys1} converge to the set $\Omega_1$. Moreover, we show that this set corresponds to the optimal points of the social welfare problem. 
\begin{mythm}\label{thm:1} For every $\bar x\in \Omega_1$ satisfying Assumption \ref{ass:sec1} there exists an open neighborhood $\mathcal O$ around $\bar x$ such that all trajectories $x$ satisfying \eqref{eq:clphsys1} with initial conditions in $\mathcal O$ converge to the set $\Omega_1$. Moreover, the power generations and demands converge to the optimal value given by \eqref{eq:uopt} and \eqref{eq:lambdastar}. 
\end{mythm}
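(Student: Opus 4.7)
The plan is to use the closed-loop Hamiltonian, appropriately shifted around an equilibrium $\bar x\in\Omega_1$, as a local Lyapunov function and then extract set convergence via LaSalle's invariance principle.

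Concretely, I would introduce the Bregman-type storage function
\[
V(x) \;=\; H(x) - H(\bar x) - (x-\bar x)^T \nabla H(\bar x).
\]
Under Assumption \ref{ass:sec1}, $\cos\bar\eta_k>0$ for every edge $k\in\E$, so the Hessian
\[
\nabla^2 H(\bar x) \;=\; \mathrm{blockdiag}\bigl(\Gamma\,\mathrm{diag}(\cos\bar\eta),\,M^{-1},\,I\bigr)
\]
is positive definite. Hence $V$ is strictly convex on a neighborhood $\mathcal O$ of $\bar x$ and attains a strict local minimum there, so small sublevel sets of $V$ around $\bar x$ are compact.

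To compute $\dot V$ along \eqref{eq:clphsys1}, I would absorb the constant forcing by using the equilibrium identity $F\,\nabla H(\bar x) = (0,\,Q_g^{-1}c + Q_d^{-1}b,\,0)^T$, where $F$ denotes the structure matrix of \eqref{eq:clphsys1}, so that $\dot x = F\bigl(\nabla H(x) - \nabla H(\bar x)\bigr)$. Splitting $F = J - R$ with $J$ skew-symmetric and $R = \mathrm{blockdiag}(0,\,A,\,L_\comm)\succeq 0$ then gives
\[
\dot V \;=\; -\bigl(\nabla H(x) - \nabla H(\bar x)\bigr)^T R \bigl(\nabla H(x) - \nabla H(\bar x)\bigr) \;=\; -\w^T A\w - (\lambda-\bar\lambda)^T L_\comm (\lambda-\bar\lambda) \;\le\; 0,
\]
so any sufficiently small compact sublevel set $\Psi\subset\mathcal O$ is positively invariant.

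LaSalle's principle then yields convergence to the largest invariant set inside $\{x\in\Psi : \w = 0,\ L_\comm(\lambda-\bar\lambda)=0\}$. On this set $p\equiv 0$, and by connectedness of the communication graph, $\lambda - \bar\lambda = \alpha(t)\1$ for some scalar $\alpha$. With $p=0$ the controller dynamics reduce to $\dot\lambda = -L_\comm\lambda = 0$ (since $L_\comm\1 = 0$ and $L_\comm\bar\lambda = 0$), hence $\alpha$ is constant. Substituting $p=0$ and $\lambda = \bar\lambda+\alpha\1$ into $\dot p = 0$, subtracting the equilibrium relation, and pre-multiplying by $\1^T$ (using $\1^T D = 0$) yields $\alpha\,\1^T(Q_g^{-1}+Q_d^{-1})\1 = 0$, so $\alpha = 0$. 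Hence $\lambda = \1\lambda^*$ and the remaining identity $D\Gamma\sin\eta = D\Gamma\sin\bar\eta$ places the limit set inside $\Omega_1$. Because the controller output satisfies $u_g = Q_g^{-1}(\lambda-c)$ and $u_d = Q_d^{-1}(b-\lambda)$, convergence of $\lambda$ to $\1\lambda^*$ delivers the optimal values \eqref{eq:uopt}.

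The principal obstacle is this LaSalle step: $\dot V = 0$ does not pin $\lambda$ down to $\bar\lambda$ but only to the one-parameter kernel of $L_\comm$, so one must exploit the $p$-dynamics together with $\1^T D = 0$ to eliminate the drift $\alpha$. A secondary subtlety is that $\Omega_1$ is generically a continuum of equilibria (since many $\eta$ can share the same value of $D\Gamma\sin\eta$ whenever the physical graph has cycles), so the convergence claim is necessarily to the set $\Omega_1$ rather than to the specific $\bar x$.
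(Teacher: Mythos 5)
Your proposal is correct and follows essentially the same route as the paper: the Bregman/shifted Hamiltonian as a local Lyapunov function, absorbing the constant forcing via the equilibrium identity, the dissipation inequality $\dot V=-\w^TA\w-(\lambda-\bar\lambda)^TL_\comm(\lambda-\bar\lambda)$, and the LaSalle step in which the residual drift $\alpha\1$ in $\ker L_\comm$ is eliminated by premultiplying the $\dot p=0$ relation by $\1^T$. The only (welcome) addition is your explicit Hessian computation justifying local positivity of $V$, where the paper instead cites references.
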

\begin{proof}
Since  the system \eqref{eq:clphsys1} is not centered around the origin we introduce a shifted Hamiltonian $\bar H$ w.r.t. $\bar x\in \Omega_1$  \cite{swing-claudio},\cite{phsurvey}, which will act as a Lyapunov function: 
\begin{equation}\label{eq:shiftedHam}
\begin{aligned}
\bar H(x)&:=H(x)-(x-\bar x)^T\nabla H(\bar x)-H(\bar x)\\
&=\frac12p^TM^{-1}p-\1 ^T\Gamma\cos \eta+\frac{1}{2}\lambda^T\lambda\\
&-\begin{bmatrix}
\eta^T-\bar \eta^T&0&\lambda^T-\bar \lambda^T
\end{bmatrix}\begin{bmatrix}
\Gamma\sin \bar \eta\\0\\\bar \lambda
\end{bmatrix}\\
&-\1 ^T\Gamma\cos \bar \eta+\frac{1}{2}\bar \lambda^T\bar{\lambda}\\
&=\frac{1}{2}p^TM^{-1}p+\frac12(\lambda-\bar \lambda)^T(\lambda-\bar \lambda)\\&-\1 ^T\Gamma\cos \eta-(\eta-\bar \eta)^T\Gamma\sin \bar \eta+\1 ^T\Gamma\cos \bar \eta.
\end{aligned}
\end{equation}
Bearing in mind Assumption \ref{ass:sec1} and \cite{convexhamiltonianpersis}, the shifted Hamiltonian satisfies $\bar H(\bar x)=0$ and $\bar H(x)\geq0$ for all $x$ in an sufficiently small open neighborhood around $\bar x$. 
Moreover, the shifted Hamiltonian satisfies $\nabla \bar H( x)=\nabla H( x)-\nabla H(\bar x)$ so that \eqref{eq:clphsys1} can be rewritten as  
\begin{align*}
\dot x&=
\begin{bmatrix}
0&D^T&0\\
-D&-A&Q_g^{-1}+Q_d^{-1}\\
0&-Q_g^{-1}-Q_c^{-1}&-L_\comm
\end{bmatrix}
\nabla \bar H(x)\\&+\begin{bmatrix}
0\\
-D\Gamma\sin \bar \eta+(Q_g^{-1}+Q_d^{-1})\bar \lambda-Q_g^{-1}c-Q_d^{-1}b\\
-L_c\bar \lambda
\end{bmatrix}\\
&=\begin{bmatrix}
0&D^T&0\\
-D&-A&Q_g^{-1}+Q_d^{-1}\\
0&-Q_g^{-1}-Q_c^{-1}&-L_\comm
\end{bmatrix}
\nabla \bar H(x).
\end{align*}
Because of the \pH\ structure of the system it easily follows that the shifted Hamiltonian satisfies 
\begin{align}\label{eq:Ldiss}
\dot {\bar H}&=-\w^T A\w -(\lambda-\bar \lambda)^TL_\comm(\lambda-\bar \lambda)\leq 0,
\end{align}
where equality holds if and only if $\w=0$ and $\lambda=\bar \lambda+\1  \alpha$ for some scalar function $\alpha$. On the set $\dot {\bar H}=0$ we have
\begin{equation*}\label{eq:invset}
\begin{aligned}
\dot \eta&=0\\
\dot p&=-D\Gamma\sin \eta+D\Gamma\sin\bar \eta+(Q_g^{-1}+Q_d^{-1})\1  \alpha=0\\
\dot \lambda&=0
\end{aligned}
\end{equation*}
On the largest invariant set where  $\dot{\bar{H}}=0$ we must have that $\alpha\equiv0$, which  follows from premultiplication of the second equation by $\1 ^T$. Hence, by LaSalle's invariance principle $p\to0, \lambda\to \bar{\lambda}, \eta\to\hat \eta$ as $t\to \infty$, for some constant $\hat \eta$ satisfying 
\[
D\Gamma \sin\hat \eta=D\Gamma \sin \bar \eta=(Q_g^{-1}+Q_d^{-1})\bar \lambda-Q_g^{-1}c-Q_d^{-1}b.
\]
Hence $x\to \Omega_1$ as  $t\to\infty$. Moreover, since the interconnection between the controller and the swing equations \eqref{eq:phswingeq} is given by
\begin{align*}
 \begin{bmatrix}
 u_g\\u_d
 \end{bmatrix}=u=y_\lambda=\begin{bmatrix}
 Q_g^{-1}(\lambda-c)\\Q_d^{-1}(b-\lambda)
 \end{bmatrix}
\end{align*}
it follows that the power generations and demands converge to the optimal value given by \eqref{eq:uopt} and \eqref{eq:lambdastar} as $t\to\infty$.
\end{proof}


\section{Primal-dual gradient controller}\label{sec:gradmeth}
By applying the primal-dual gradient method \cite{zhangpapaautomatica}, \cite{AGC_ACC2014}, \cite{arrow_gradmethod} to the minimization problem \eqref{eq:minprob}, we obtain  the real-time dynamic pricing model
\begin{equation}\label{eq:graddistalvswing}
\begin{aligned}
\tau_{g}\dot u_g&=-\nabla C(u_g)+\lambda+w_g\\
\tau_d\dot u_d&=\nabla U(u_d)-\lambda+w_d\\
\tau_{ v}\dot  v&=-D_\comm^T\lambda\\
\tau_\lambda\dot \lambda&=D_\comm  v-u_g+u_d
\end{aligned}
\end{equation}
where we introduce additional inputs $w=(w_g,w_d)$ which are  to be specified later on. Here $\tau_E=\text{blockdiag}(\tau_g,\tau_d,\tau_v,\tau_\lambda)>0$ correspond to the timescales of the controller.  {Note that we have} constructed a distributed controller where $\lambda_i$ acts as a price in control area $i\in\V$ and $ v$ represents the information exchange of the differences of the prices $\lambda$ along the edges the communication graph.

Let us define the energy variables $x_E=(x_g,x_d,x_v,x_\lambda) = (\tau_gu_g,\tau_du_d,\tau_ v v,\tau_\lambda\lambda)=\tau_Ez_E$ and notice that in the sequel, we interchangeably write the system dynamics in terms of energy variables (denoted by $x$) and co-energy variables (denoted by $ z $) for ease of notation. 

An interesting fact is that the market dynamics \eqref{eq:graddistalvswing} admits a  \pH\  representation which is given by  
\begin{equation}\label{eq:dynpricsys}
\begin{aligned}
\dot x_E&=
\begin{bmatrix}
	0  & 0 & 0       & I          \\
	0  & 0 & 0       & -I         \\
	0  & 0 & 0       & -D_\comm^T \\
	-I & I & D_\comm & 0
\end{bmatrix}\nabla H_c(x_E)-\nabla R(z_E)\\
&+\begin{bmatrix}
I&0 \\
0& I\\
0&0\\
0&0
\end{bmatrix}w\\
y_E&=\begin{bmatrix}
I&0&0&0\\
0&I&0&0
\end{bmatrix}\nabla H_c(x_E)=\begin{bmatrix}
u_g\\u_d
\end{bmatrix},
\end{aligned}
\end{equation}
with the quadratic controller Hamiltonian 
\begin{align}\label{eq:hc}
H_c(x_E)=\frac{1}{2}x_E^T\tau_E^{-1}x_E.
\end{align}
Note that the latter system is indeed a \pH\ system since  $R$ is convex  and therefore satisfies the dissipativity property 
\begin{align*}
(z_1-z_2)^T(\nabla R(z_1)-\nabla R(z_2))\geq0, \ \forall z_1,z_2\in\mathbb R^{3n+m_c}.
\end{align*}
We obtain a power-preserving interconnection between \eqref{eq:phswingeq} and \eqref{eq:dynpricsys} by choosing $w=-y, u=y_E$. Define the extended vectors of(co-)energy variables by $x=(\eta,p,x_E), z=(\eta,\w,z_E)$ then the closed-loop \pH\ system takes the form
\begin{align}\label{eq:clphsys2}
\dot x&=\hspace{-1pt}
\begin{bmatrix}
0     &     D^T     &     0     &     0    &    0    &    0\\
-D     &     -A     &     I     &     -I    &    0    &    0\\
0     &     -I     &     0     &     0    &    0    &    I\\
0     &     I     &     0     &     0    &    0    &    -I\\
0     &     0     &     0     &     0    &    0    &    -D_\comm^T\\
0     &     0     &     -I     &     I    &    D_\comm    &    0
\end{bmatrix}\hspace{-2pt}\nabla H(x)-\nabla R(z)
\end{align}
with $H$  the sum of the energy function \eqref{eq:hp} corresponding to the physical model, and the controller Hamiltonian \eqref{eq:hc}.
We define the equilibrium set of \eqref{eq:clphsys2}, expressed in the co-energy variables, by 
\begin{align}\label{eq:omega2}
\Omega_2&=\{ \bar z \ | \  \bar z \text{ is an equilibrium of } \eqref{eq:clphsys2} \}.
\end{align}
It is noted that $\Omega_2$ is equal to the set of all $\bar z$ that satisfy the KKT optimality conditions \eqref{eq:KKTcond} and simultaneously satisfy the zero frequency constraints
\begin{align*}
-D\Gamma \sin \bar \eta+\bar u_g-\bar u_d=0, \quad \bar \w=0
\end{align*}
of the physical network \eqref{eq:phswingeq}. Hence, $\Omega_2$ corresponds to the desired equilibria. 


\begin{figure}
\begin{center}
\includegraphics[width=0.8\linewidth]{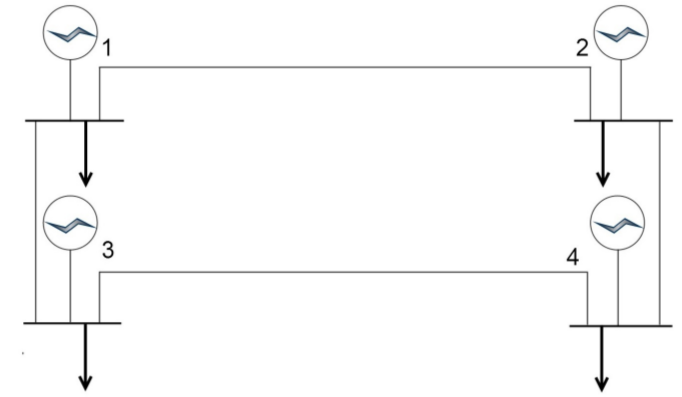}
\end{center}
\caption{A power grid consisting of 4 control areas \cite{swing-claudio}.}
\label{fig:4cont}
\end{figure}


\subsection{Stability}
\begin{mythm}  For every $\bar z\in \Omega_2$ there exists an open neighborhood $\mathcal O$ around $\bar z$ such that all trajectories $z$ satisfying \eqref{eq:clphsys2} with initial conditions in $\mathcal O$ converge to the set $\Omega_2$. 
\end{mythm}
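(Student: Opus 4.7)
The plan is to transport the shifted-Hamiltonian technique used for Theorem~\ref{thm:1} to the closed loop~\eqref{eq:clphsys2}, with the dissipation now produced jointly by the physical damping $A$ and by the convexity of $R$. Fix $\bar z\in\Omega_2$, let $\bar x$ be the corresponding equilibrium in energy coordinates, and assume its voltage-angle component satisfies the security-type condition $\bar\eta_k\in(-\pi/2,\pi/2)$ analogous to Assumption~\ref{ass:sec1}. I would introduce the shifted Hamiltonian
\begin{align*}
\bar H(x) = H(x) - (x-\bar x)^T\nabla H(\bar x) - H(\bar x),
\end{align*}
with $H=H_p+H_c$. Since $H_c$ is strictly convex (quadratic with $\tau_E^{-1}>0$) and $H_p$ is locally strictly convex at $\bar\eta$ under the security condition, $\bar H$ satisfies $\bar H(\bar x)=0$ and $\bar H(x)>0$ on a punctured neighbourhood $\mathcal O$ of $\bar x$, hence qualifies as a Lyapunov candidate.

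Next I would use the equilibrium relation for~\eqref{eq:clphsys2} to recast the dynamics in incremental form
\begin{align*}
\dot x = F\,\nabla\bar H(x) - \bigl(\nabla R(z)-\nabla R(\bar z)\bigr),
\end{align*}
where $F$ denotes the structure matrix of~\eqref{eq:clphsys2} and $\nabla\bar H(x)=\nabla H(x)-\nabla H(\bar x)$. Differentiating $\bar H$ along trajectories, the skew-symmetric part of $F$ cancels and only the damping block and the cost-increment term survive, yielding
\begin{align*}
\dot{\bar H} = -\w^T A\w - (z-\bar z)^T\bigl(\nabla R(z)-\nabla R(\bar z)\bigr)\le 0.
\end{align*}
The key new ingredient compared with Theorem~\ref{thm:1} is that, by strict convexity of $C$ and strict concavity of $U$, the cost-increment term is nonnegative and vanishes precisely when $u_g=\bar u_g$ and $u_d=\bar u_d$; together with the damping term this delivers semidefinite dissipation on all the components that are directly acted upon by costs or friction.

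To finish I would apply LaSalle's invariance principle on a compact sublevel set of $\bar H$ contained in $\mathcal O$. On $\{\dot{\bar H}=0\}$ one has $\w=0$, $u_g=\bar u_g$ and $u_d=\bar u_d$; invariance then forces $\dot u_g=\dot u_d=0$, which together with $w=-y=(-\w,\w)=0$ and~\eqref{eq:graddistalvswing} yields $\lambda=\nabla C(\bar u_g)=\nabla U(\bar u_d)=\bar\lambda$ via~\eqref{eq:KKTcond}. Imposing $\dot p=0$ and $\dot\lambda=0$ on the invariant set then produces $D\Gamma\sin\eta=\bar u_g-\bar u_d$ and $D_\comm v=\bar u_g-\bar u_d$, which are exactly the remaining zero-frequency and feasibility conditions characterising~$\Omega_2$. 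The main obstacle I anticipate is precisely this LaSalle step: because $R$ is independent of $v$ and $\lambda$, the dissipation inequality contains no direct information about these two blocks, and one must propagate $u_g=\bar u_g$, $u_d=\bar u_d$ through the controller equations to pin them down. Once that is done, trajectories starting in $\mathcal O$ converge to $\Omega_2$ as claimed.
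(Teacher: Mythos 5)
Your proposal follows essentially the same route as the paper's proof: the same Bregman-type shifted Hamiltonian, the same incremental rewriting of \eqref{eq:clphsys2}, the same dissipation identity $\dot{\bar H}=-\w^TA\w-(z-\bar z)^T(\nabla R(z)-\nabla R(\bar z))$, and the same LaSalle argument (which you carry out in more detail than the paper, correctly propagating $u_g=\bar u_g$, $u_d=\bar u_d$, $\w=0$ through the controller equations to recover $\lambda=\bar\lambda$ and constancy of $v$). Your explicit invocation of a security condition $\bar\eta_k\in(-\pi/2,\pi/2)$ for local positivity of $\bar H$ is a point the paper's theorem statement leaves implicit but its proof also relies on, so this is a welcome clarification rather than a deviation.
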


\begin{proof} Let $\bar z\in\Omega_2$ and let this equilibrium be expressed in the energy variables by defining the vector $\bar x=(\bar \eta,0,\tau_g\bar u_g,\tau_d\bar u_d,\tau_v\bar v,\tau_\lambda\bar \lambda)$. 
Let the shifted Hamiltonian $\bar H$  around $\bar x$ be given by
\begin{align*}
&\bar H(x)=H(x)-(x-\bar x)^T\nabla H(\bar x)-H(\bar x)\\
&=\frac{1}{2}p^TM^{-1}p+\frac12(x_E-\bar x_E)^T\tau_E^{-1}(x_E-\bar x_E)\\&-\1 ^T\Gamma\cos \eta-(\eta-\bar \eta)^T\Gamma\sin \bar \eta+\1 ^T\Gamma\cos \bar \eta.
\end{align*}
As mentioned in the proof of Theorem \ref{thm:1}, it can be shown that  $\bar H(\bar x)=0$ and $\bar H(x)\geq0$ for all $x$ in a  sufficiently small open neighborhood around $\bar x$. After rewriting, the closed loop \pH\ system \eqref{eq:clphsys2} is equivalently described by 
\begin{align*}
\dot x&=
\begin{bmatrix}
0     &     D^T     &     0     &     0    &    0    &    0\\
-D     &     -A     &     I     &     -I    &    0    &    0\\
0     &     -I     &     0     &     0    &    0    &    I\\
0     &     I     &     0     &     0    &    0    &    -I\\
0     &     0     &     0     &     0    &    0    &    - D_\comm^T\\
0     &     0     &     -I     &     I    &    D_\comm    &    0
\end{bmatrix}\nabla \bar H(x)\\&-\nabla R(z)+\nabla R(\bar z).
\end{align*}
The shifted Hamiltonian $\bar H$ satisfies 
\begin{align*}
\dot{\bar H}&=-\w^T A\w-(z-\bar z)^T(\nabla R(z)-\nabla R(\bar z))\leq0
\end{align*}
where equality holds if and only if $\w=0, u_g=\bar u_g, u_d=\bar u_d$ since $R(z)$ is strictly convex in $u_g$ and $u_d$. On the largest invariant set $S$ where $\dot{\bar H}=0$ we have $\lambda=\bar \lambda$ and therefore $ v$ is constant. We conclude that $S\subset\Omega_2$ and by LaSalle's invariance principle it follows that $z\to S\subset \Omega_2$ as $t\to \infty$. 
\end{proof}



\subsection*{Comparison of both controllers}
When comparing both controllers it is noticed that the internal-model-based controller requires that the utility and cost functions are quadratic.
Since the matrices $Q_g,Q_d$ appear in the closed-loop interconnection structure \eqref{eq:clphsys1}, it would be challenging to generalize the internal-model-based controller to the case where  general strictly convex utility functions are considered. On the other hand, by applying the primal-dual gradient method to the optimization problem \eqref{eq:minprob} it is possible to construct a distributed controller that can deal with general convex utility functions.


\begin{myrem}\label{rem:lambda}
The controller variables $\lambda$ of both controllers are interpreted as the electricity prices, where we may have different prices in each of the control areas initially. Note that the controllers differ in the way they compensate for the price differences. On the one hand,  the $v$ dynamics of the gradient method based controller \emph{integrates} the differences between the prices $\lambda$. On the other hand, in the internal-model-based controller these differences are \emph{dissipated} through the Laplacian matrix $L_c$, by inferring from \eqref{eq:Ldiss} that as long as $\lambda$ is not in the range of $\1$, energy will be dissipated from the system. This has a stabilizing effect on the overall dynamics of the closed-loop system in case the internal-model-based controller is applied, see also Section \ref{sec:num}.
\end{myrem}

What both controllers have in common is that in the design of the distributed controllers there is freedom in choosing any communication graph, as long as the graph is connected.  Another remark is that, if we would assume that 
\begin{align*}
C(u_g)=\sum_{i=1}^{n}C_i(u_{gi}), \quad U(u_d)=\sum_{i=1}^{n}U_i(u_{di}),
\end{align*}
both distributed controllers in each control area require only information about their individual utility and cost functions, which is beneficial for privacy reasons.

\section{Numerical results}\label{sec:num}
We illustrate the performance of both proposed controllers, when applied to an academic test case, where we consider 4 control areas\footnote{This example is based on the 4 control area case study discussed in \cite{swing-claudio}.}, see Figure \ref{fig:4cont}. To compare both controllers we use identical quadratic utility and cost functions in the social welfare problem. The parameters  used for both cases are given by $Q_g=\text{diag}(1,2,3,4),M=\Gamma=Q_d=\frac12A=I,c=0, b=\text{col}(1,1.25,1.5,1.75), D_\comm=D, \tau_E=I$. 

We initialize at (optimal) steady operation, while at time $t=1$ we introduce a change in the utility function of the demand corresponding to area 4 by changing $b$ into $b=\text{col}(1,1.25,1.5,2)$, i.e., the consumption of electricity becomes more attractive in this area.  The simulations of the closed-loop systems are plotted in Figure \ref{fig:int} and \ref{fig:grad}. At steady state we  observe that the power production is higher in the control areas with lower costs functions and similar conclusions can be drawn for the power consumption. Hence at steady state, the social welfare is maximized. 

When the consumers utility function is changed, we observe that the consumption increases in control area 4, as we would expect. As a consequence, the common electricity price rises so that the power demands in the other control areas decrease. Simultaneously, the power production is increased to match the total supply and demand. It follows that the closed-loop dynamics converges again to the point where the social welfare is maximized.  It is noted that both controllers show comparable performance with the gradient method based controller showing a slightly more oscillatory behavior, which can be explained by Remark \ref{rem:lambda}.  





\begin{figure}
	\includegraphics[width=1.0\linewidth]{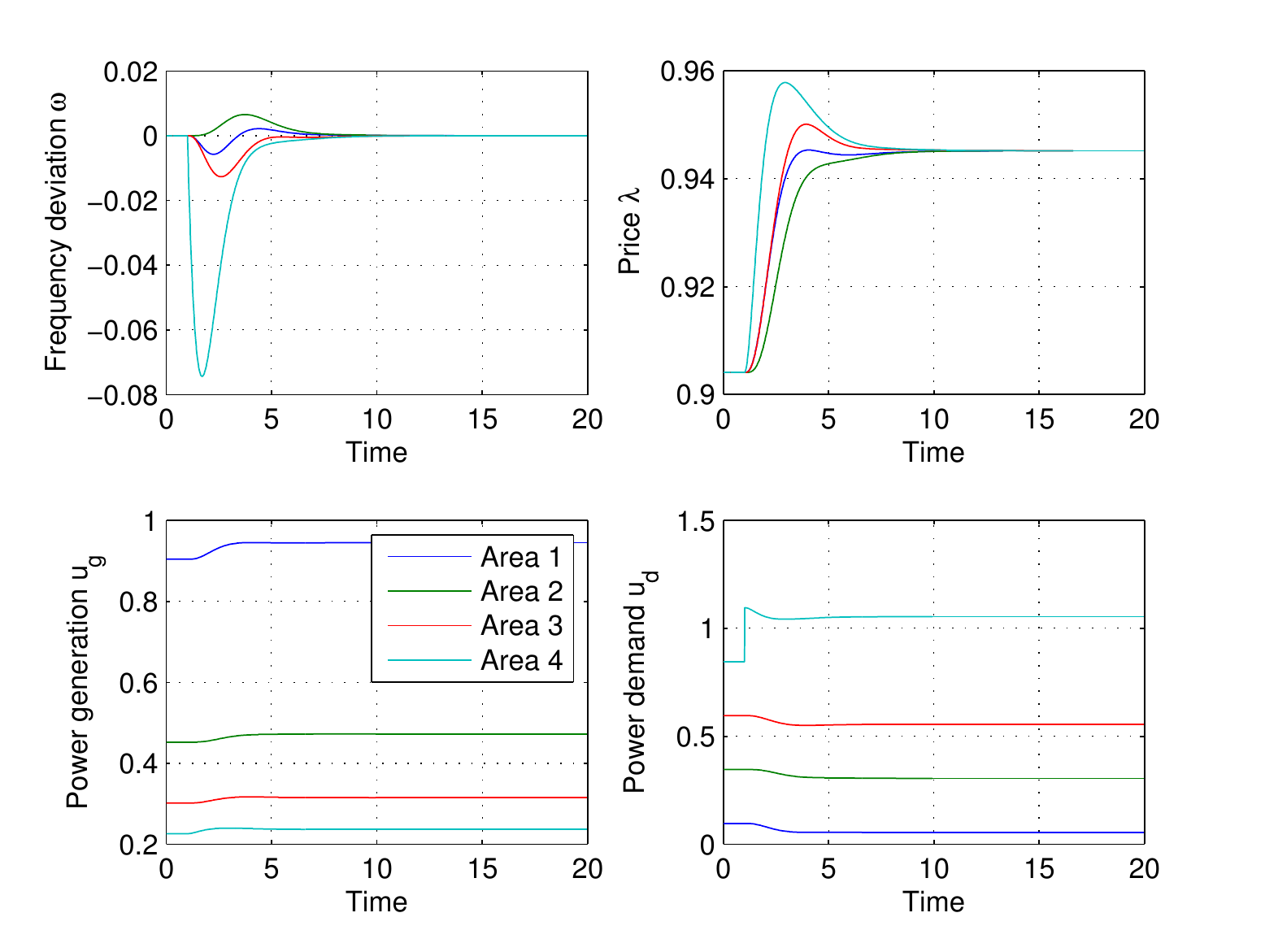}
\caption{Performance of the internal-model-based controller. At time $t=1$ the consumers utility function corresponding to control area 4 is changed, so that consumption of electricity becomes more attractive in this area.}
\label{fig:int}
\end{figure}

\begin{figure}
\includegraphics[width=\linewidth]{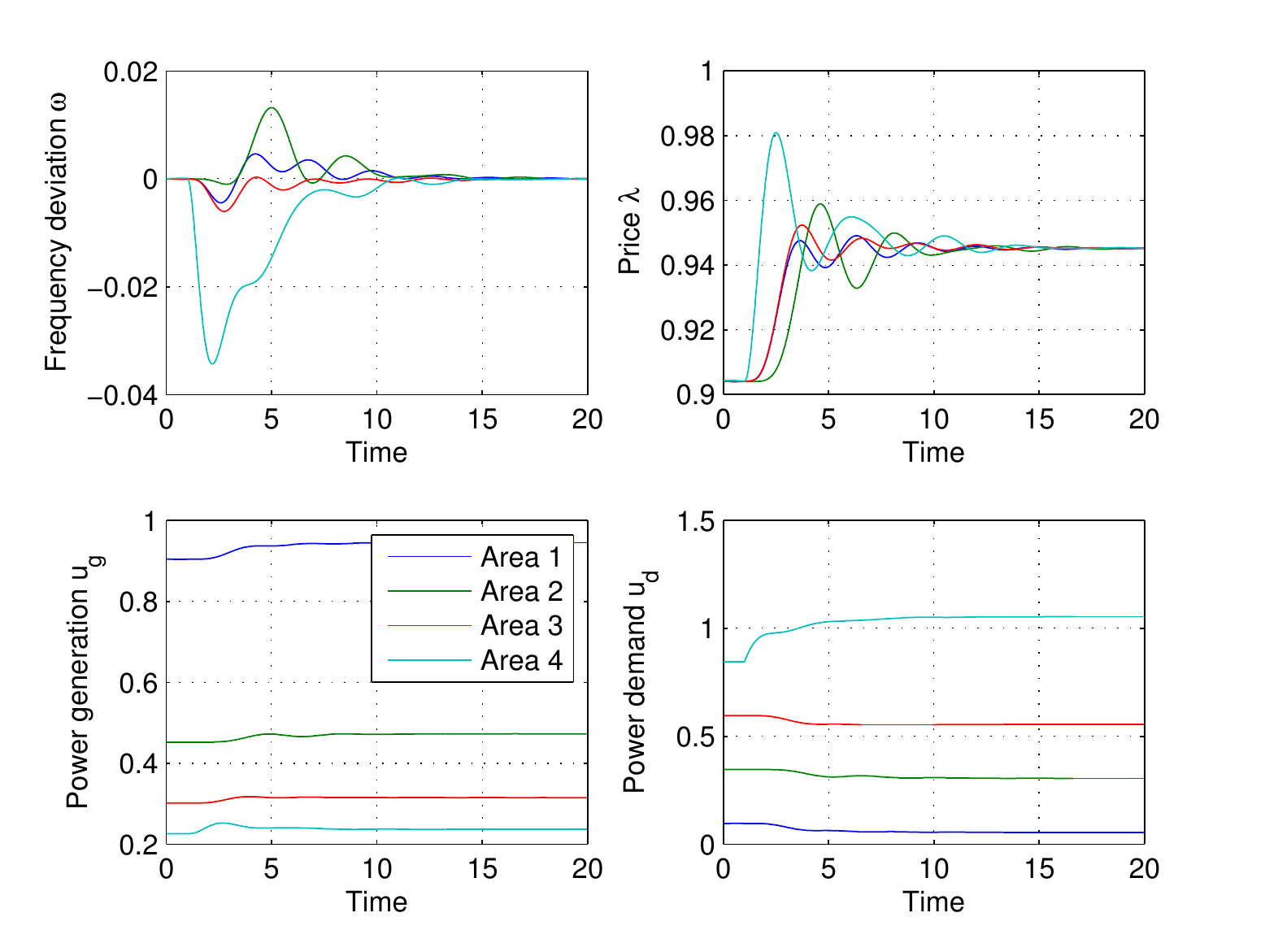}
\caption{Performance of the gradient method based controller. The same parameters and change in the utility function are considered as in the results shown in Figure \ref{fig:int}. Note that the gradient method based controller shows a slightly more oscillatory behavior, since it contains additional layers of integrators compared to the internal-model-based controller.}
\label{fig:grad}
\end{figure}

%


\section{Conclusions and future research}
In this paper we proposed a novel way of modeling, analysis and control of smart grids based on the \pH\ framework. We have proposed two different types of distributed real-time price-based controllers that achieve frequency regulation, while maximizing the social welfare. One controller is internal-model-based and the other is gradient method based. In the controller design there is freedom in choosing any connected communication graph. An important result is that  the market dynamics, obtained from applying the two proposed price-based controllers, can be represented in a \pH\ form. By coupling this with the \pH\ representation of the physical power network, 
%
a closed-loop \pH\ system is obtained whose properties are exploited to prove asymptotic stability to the set of optimal points.
By applying the real-time dynamic pricing models to the same (academic) test case, numerical results have shown the performance of both controllers in case of quadratic utility functions, and show convergence to the point where the  social welfare is maximized, even after a change in the consumers utility function.

\subsection*{Future research}
In this paper we only considered the total supply-demand matching constraints in the social welfare problem. A natural extension for future research is to  include additional inequality constraints, which for example correspond to congestion. 
Although the model for the power network used here is relatively simple, it provides a good starting point for considering more complex physical models of the power grid in the \pH\ framework. In these models one may for example include reactive power and voltage control. 
In addition to the proposed controller design methods, one would also like to develop controllers that can deal with uncertainties in the (demand) utility functions.


\bibliographystyle{IEEEtran}
\bibliography{./IEEEabrv,./db}

\begin{thebibliography}{10}
\providecommand{\url}[1]{#1}
\csname url@samestyle\endcsname
\providecommand{\newblock}{\relax}
\providecommand{\bibinfo}[2]{#2}
\providecommand{\BIBentrySTDinterwordspacing}{\spaceskip=0pt\relax}
\providecommand{\BIBentryALTinterwordstretchfactor}{4}
\providecommand{\BIBentryALTinterwordspacing}{\spaceskip=\fontdimen2\font plus
\BIBentryALTinterwordstretchfactor\fontdimen3\font minus
  \fontdimen4\font\relax}
\providecommand{\BIBforeignlanguage}[2]{{%
\expandafter\ifx\csname l@#1\endcsname\relax
\typeout{** WARNING: IEEEtran.bst: No hyphenation pattern has been}%
\typeout{** loaded for the language `#1'. Using the pattern for}%
\typeout{** the default language instead.}%
\else
\language=\csname l@#1\endcsname
\fi
#2}}
\providecommand{\BIBdecl}{\relax}
\BIBdecl

\bibitem{dynpricehis}
F.~Alvarado, ``The stability of power system markets,'' \emph{IEEE Transactions
  on Power Systems}, vol.~14, no.~2, pp. 505--511, May 1999.

\bibitem{kiani_anna}
A.~Kiani and A.~Annaswamy, ``The effect of a smart meter on congestion and
  stability in a power market,'' in \emph{$49^{th}$ IEEE Conference on Decision
  and Control}, Atlanta, USA, December, 2010.

\bibitem{CDC2010_Stability}
M.~Roozbehani, M.~Dahleh, and S.~Mitter, ``On the stability of wholesale
  electricity markets under real-time pricing,'' in \emph{49th IEEE Conference
  on Decision and Control (CDC)}, 2010, pp. 1911--1918.

\bibitem{zhangpapaautomatica}
X.~Zhang and A.~Papachristodoulou, ``A real-time control framework for smart
  power networks: Design methodology and stability,'' \emph{Automatica},
  vol.~58, pp. 43--50, 2015.

\bibitem{alv_meng_power_coupl_market}
F.~Alvarado, J.~Meng, C.~DeMarco, and W.~Mota, ``Stability analysis of
  interconnected power systems coupled with market dynamics,'' \emph{IEEE
  Transactions on Power Systems}, vol.~16, no.~4, pp. 695--701, November 2001.

\bibitem{AGC_ACC2014}
N.~Li, L.~Chen, C.~Zhao, and S.~H. Low, ``Connecting automatic generation
  control and economic dispatch from an optimization view,'' in \emph{American
  Control Conference}.\hskip 1em plus 0.5em minus 0.4em\relax IEEE, 2014, pp.
  735--740.

\bibitem{swing-claudio}
M.~Bürger, C.~D. Persis, and S.~Trip, ``An internal model approach to
  (optimal) frequency regulation in power grids,'' in \emph{Proceedings of the
  MTNS}, Groningen, 2014, pp. 577--583.

\bibitem{output_agreement}
M.~Bürger and C.~{De Persis}, ``Dynamic coupling design for nonlinear output
  agreement and time-varying flow control,'' \emph{Automatica}, vol.~51, pp.
  210--222, January 2015.

\bibitem{arrow_gradmethod}
J.~Arrow, L.~Hurwicz, H.~Uzawa, and H.~Chenery, \emph{Studies in linear and
  non-linear programming}.\hskip 1em plus 0.5em minus 0.4em\relax Stanford
  University Press, 1958.

\bibitem{feijer-paganini}
D.~Feijer and F.~Paganini, ``Stability of primal–dual gradient dynamics and
  applications to network optimization,'' \emph{Automatica}, vol.~46, pp.
  1974--1981, September 2010.

\bibitem{powsysdynwiley}
J.~Machowski, J.~Bialek, and J.~Bumby, \emph{Power System Dynamics: Stability
  and Control}, 2nd~ed.\hskip 1em plus 0.5em minus 0.4em\relax Ltd: John Wiley
  \& Sons, 2008.

\bibitem{convexopt}
S.~Boyd and L.~Vandenberghe, \emph{Convex Optimization}, 1st~ed.\hskip 1em plus
  0.5em minus 0.4em\relax Cambridge University Press, 2004.

\bibitem{swing-claudio-voltage}
S.~Trip, M.~Bürger, and C.~{De Persis}, ``An internal model approach to
  frequency regulation in power grids,'' \emph{Automatica}, September 2014,
  preprint submitted.

\bibitem{phsurvey}
A.~van~der Schaft and D.~Jeltsema, ``Port-hamiltonian systems theory: An
  introductory overview,'' \emph{Foundations and Trends in Systems and
  Control}, vol.~1, no. 2-3, pp. 173--378, 2014.

\bibitem{convexhamiltonianpersis}
M.~Bürger and C.~{De Persis}, ``Internal models for nonlinear output agreement
  and optimal flow control,'' in \emph{Proc. of 9th IFAC Symposium on Nonlinear
  Control Systems (NOLCOS))}, Toulouse, France, 2013, pp. 289--294.

\end{thebibliography}

\end{document}